\theoremstyle{plain}
\newtheorem{theorem}{Theorem}[section]
\newtheorem{lemma}[theorem]{Lemma}
\newtheorem{proposition}[theorem]{Proposition}
\newtheorem{corollary}[theorem]{Corollary}
\theoremstyle{definition}
\theoremstyle{remark}
\numberwithin{equation}{section}
\begin{document}
\title{Stability of Inverse Resonance Problem on the Half Line }
\author{Lung-Hui Chen$^1$}\maketitle\footnotetext[1]{General Education Center, Ming Chi University of Technology, New Taipei City, 24301, Taiwan. Email:
mr.lunghuichen@gmail.com.}\maketitle
\begin{abstract}
We consider the inverse resonance problem in one-dimensional scattering theory. The scattering matrix consists of $2\times 2$ entries of meromorphic functions, which are quotients of certain Fourier transform. The resonances are expressed as the zeros of Fourier transform of wave field. For compactly-supported perturbation, we are able to quantitatively estimate the zeros and poles of each meromorphic entry. The size of potential support is connected to the zero distribution of scattered wave field. We derive the inverse stability on scattering source based on certain knowledge on the perturbation theory of resonances. When the resonances are distributed regularly, there is certain natural stability through the value distribution theory.
\\MSC: 34B24/35P25/35R30.
\\Keywords:  resonance; wave scattering; Schr\"{o}dinger equation; exponential polynomial; inverse stability.
\end{abstract}
\section{Introduction}
In this note, we consider the scattering theory of one-dimensional Schr\"{o}dinger equation
\begin{equation}\label{1.1}
\psi''(k,x)+k^{2}\psi(k,x)=V(x)\psi(k,x),\,x\in\mathbb{R},
\end{equation}
satisfying following assumptions:
\begin{eqnarray}\label{1.2}
&&V(x)\in \mathcal{C}^{2}((0,\infty);\mathbb{R});\\
&&V'(0)=0,\,V(0)=1;\\\label{1.4}
&&\mbox{For every }N, \mbox{there exists a constant }C_{N}\mbox{ such that }V(x)\leq C_{N}e^{-N|x|}.
\end{eqnarray}
For example, a Gaussian potential satisfies the~(\ref{1.2}--\ref{1.4}). 
It is known that the Jost solutions of~(\ref{1.1}) from the left and right satisfy
\begin{eqnarray}\label{AB}
&&\phi_{+}(x,k)=\left\{%
\begin{array}{ll}
e^{ikx},&x\rightarrow\infty;\vspace{5pt}\\
\frac{\hat{X}(k)}{ik}e^{ikx}+\frac{\hat{Y}(-k)}{ik}e^{-ikx},&x\rightarrow-\infty;\label{CD}
\end{array}%
\right.\\
&&\phi_{-}(x,k)=\left\{%
\begin{array}{ll}
\frac{\hat{X}(k)}{ik}e^{-ikx}+\frac{\hat{Y}(k)}{ik}e^{ikx},&x\rightarrow\infty;\vspace{5pt}\\
e^{-ikx},&x\rightarrow-\infty,
\end{array}%
\right.
\end{eqnarray}
where the Fourier transforms $\hat{X}(k)$ and $\hat{Y}(k)$ are entire functions in $\mathbb{C}$. We define the scattering matrix $S(x)$ as
\begin{equation}\label{S}
S(k)=\left[\begin{array}{cc}\frac{ik}{\hat{X}(k)} & \frac{\hat{Y}(k)}{\hat{X}(k)} \vspace{7pt}\\\frac{\hat{Y}(-k)}{\hat{X}(k)} & \frac{ik}{\hat{X}(k)}\end{array}\right]:=\left[\begin{array}{cc}T(k) & R(k) \vspace{7pt}\\L(k) & T(k)\end{array}\right],
\end{equation}
in which
\begin{equation}\label{7}
T(k):=\frac{ik}{\hat{X}(k)}
\end{equation}
is the transmission coefficient, and $R(k)$ and $L(k)$ are the reflection coefficients from the right and left respectively. 
In particular, the zeros of $\hat{X}$, say, $\{\sigma_{j}\}$ are the {\bf resonances} of~(\ref{1.1}), satisfying $|\sigma_{1}|\leq|\sigma_{2}|\leq|\sigma_{3}|\leq\cdots$. Physically, $L^{2}$-eigenvalues are states in which the particles are permanently localized. On the contrast, resonances correspond to quasi-stationary (metastable) states that exist only for a finite time. Resonances that are close to the real axis appear as bumps in the scattering cross section and  measured feasibly in the laboratory. We refer more details to \cite{Dyatlov,Marletta,Mellin}, and we denote the resonant sequence as
$$\Sigma:=\{z_{n}\}.$$ 
The scattering matrix $S(k)$ is meromorphic in $\mathbb{C}$, and its poles in $\{\Im k>0\}$ are the square roots of $L^{2}$-eigenvalues of~(\ref{1.1}). We understand the eigenvalues and resonances are directly observable in spectrometers. \par
To locate the resonant sequence of~(\ref{1.1}), we use Froese's theorem \cite[Theorem\,1.3]{Froese} that if $V$ is a super-exponentially decreasing potential satisfying his Hypothesis 5.1 \cite[p.\,261]{Froese}, then the asymptotic distribution of resonances is identical to the asymptotic distribution of zeros for the function $\hat{V}(2k)\hat{V}(-2k)$. The~(1.2) is trivially a super-exponentially decreasing potential. In \cite[p.\,257]{Froese}, we have the scattering matrix of the form:
$$\left[\begin{array}{cc}1 & 0 \vspace{7pt}\\0 & 1\end{array}\right]+\left[\begin{array}{cc}T_{11} & T_{12}\vspace{5pt}\\T_{21} & T_{22}\end{array}\right].$$
\par
 
In inverse resonance problem, we consider to determine the potential $V$ from the resonances of~(\ref{1.1}) which includes the $L^{2}$-eigenvalues. Such an inverse process to identify the knowledge of  the emitting source by measuring all sorts of respects of the emittance or perturbed wave field in observational area has been research issues ever since the days of A. Sommerfeld and E. Rutherford. The inverse resonance problem of Schr\"{o}dinger operator on the half line has been studied in \cite{Dyatlov,Korotyaev2,Marletta}. Previously, Hitrik \cite{Hitrik}, Marletta \cite{Marletta}, Korotyaev \cite{Korotyaev2} and Bledsoe \cite{Beldsoe} studied the stability of inverse resonance problem. 
We state the following uniform stability result for $\mathcal{C}^{2}$-perturbation in this paper.
\begin{theorem}\label{11}
Let assume the Hypothesis 5.1 in \cite[p.\,261]{Froese},
and $\Sigma^{j}:=\{z_{n}^{j}\}_{n=1}^{\infty}$ be the resonant sequence of $V^{j}\in\mathcal{C}^{2}((0,\infty))$, $j=1,2$. Then, for any $\epsilon>0$,  there exists $\delta>0$ such that $|V^{1}(x)/V^{2}(x)-1|<\epsilon$, whenever $$0\leq\sup_{j}|\sigma_{j}^{1}-\sigma_{j}^{2}|<\delta.$$
\end{theorem}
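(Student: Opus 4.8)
\medskip
\noindent\emph{Proof strategy.}
The plan is to read the statement as a conditional stability result for an inverse problem about entire functions, exploiting the rigidity furnished by Froese's asymptotics together with value distribution theory. First I would fix the analytic objects: for $V=V^{j}$ write $a_{j}(k):=\hat X^{j}(k)/(ik)$ for the Jost function, so the resonances $\Sigma^{j}=\{\sigma_{n}^{j}\}$ are precisely the zeros of $\hat X^{j}$ and $a_{j}(k)\to1$ as $|k|\to\infty$ along $\mathbb{R}$. Under Hypothesis 5.1 the two potentials lie in one fixed super-exponentially decaying class, so Froese's theorem gives that the counting function of $\Sigma^{j}$ matches that of the zeros of $\widehat{V^{j}}(2k)\widehat{V^{j}}(-2k)$; hence $\hat X^{1}$ and $\hat X^{2}$ share one finite order $\rho$ and genus $p=\lfloor\rho\rfloor$ with a regular zero density, and the Hadamard representation
\[
\hat X^{j}(k)=ik\,e^{P_{j}(k)}\prod_{n}E_{p}\!\Big(\frac{k}{\sigma_{n}^{j}}\Big),\qquad \deg P_{j}\le p,
\]
holds with $E_{p}$ the Weierstrass elementary factor. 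I would then use the normalization $V(0)=1,\ V'(0)=0$, inserted into the Born (high-energy) expansion of $a_{j}$ and into the Taylor expansion of $\log a_{j}$ at $k=0$, to pin $P_{j}$ down as an explicit functional of $\{\sigma_{n}^{j}\}$; thus, modulo this fixed normalization, $\Sigma^{j}$ encodes $V^{j}$, which is the uniqueness underlying the theorem.

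Next I would establish stability of this encoding under $\sup_{n}|\sigma_{n}^{1}-\sigma_{n}^{2}|<\delta$. Since $\sum_{n}|\sigma_{n}^{j}|^{-(p+1)}<\infty$ with the sum bounded uniformly over the class, a term-by-term estimate
\[
\Big|\log\frac{E_{p}(k/\sigma_{n}^{1})}{E_{p}(k/\sigma_{n}^{2})}\Big|\le C\,(1+|k|)^{p+1}\,\frac{|\sigma_{n}^{1}-\sigma_{n}^{2}|}{|\sigma_{n}^{1}\sigma_{n}^{2}|}\qquad(|\sigma_{n}^{j}|\ge 2|k|),
\]
together with the direct treatment of the finitely many small zeros, shows the ratio of canonical products tends to $1$ as $\delta\to0$ with a modulus of continuity sensitive to $|k|$, not merely locally uniform. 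Simultaneously $|P_{1}(k)-P_{2}(k)|=O(\delta)$: this is where the perturbation theory of resonances enters, as the coefficients of $P_{j}$ are contour integrals and residue sums in the $\sigma_{n}^{j}$ and so depend stably on them. Hence $\hat X^{1}/\hat X^{2}\to1$ and, after taking logarithmic derivatives, the two Jost functions agree in the limit in a growth-aware topology.

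Finally I would transfer this to the potentials. As $V^{j}$ is super-exponentially small, $\widehat{V^{j}}$ is entire of finite order and $V^{j}$ is recovered from it by inverse Fourier transform, while the companion entry $\hat Y^{j}$ is controlled by $\hat X^{j}$ through unitarity of $S$ on $\mathbb{R}$; the reconstruction $\hat X^{j}\mapsto V^{j}$ (Born series / Marchenko equation, continuous on the relevant class) then gives $|V^{1}(x)-V^{2}(x)|\le\eta(\delta)\,w(x)$, where $w$ is the common super-exponential majorant and $\eta(\delta)\to0$. Value distribution theory is essential here: Nevanlinna's relation between $N(r,1/\hat X^{j})$ and the characteristic $T(r,\hat X^{j})$ forces the Jost functions to coincide in growth, not only in value, on the whole plane, and a regularly distributed $\Sigma$ makes the estimate uniform and effective --- the ``natural stability'' of the abstract. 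Dividing by $V^{2}(x)\asymp w(x)$ turns the additive bound into $|V^{1}(x)/V^{2}(x)-1|\le C\,\eta(\delta)$, and choosing $\delta$ with $C\,\eta(\delta)<\epsilon$ completes the argument.

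I expect the last transfer to be the main obstacle: passing from ``$\hat X^{1}$ close to $\hat X^{2}$ on compacta'' to closeness in a topology fine enough to control the \emph{ratio} $V^{1}/V^{2}$, rather than the difference $V^{1}-V^{2}$. Since $V^{2}$ vanishes faster than any exponential, an additive error in the reconstructed potential is useless unless it is shown to decay at the same rate, which forces one to track the exact order, type and Phragm\'en--Lindel\"of indicator of $\hat X^{j}$ through the perturbation; here both Froese's Hypothesis 5.1 (so that $\Sigma$ is regularly distributed and the canonical product and Nevanlinna characteristic are predictable) and the quantitative value-distribution estimates are indispensable. A secondary difficulty, handled by the perturbation theory of resonances, is the uniform control of $e^{P_{j}}$, i.e.\ showing that the low-energy Taylor data and the subleading high-energy asymptotics of $\log a_{j}$ depend stably on $\Sigma^{j}$.
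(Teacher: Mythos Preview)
Your route is genuinely different from the paper's. You work with the Jost function $\hat X^{j}$, factor it via Hadamard with an a priori unknown order $\rho$ and genus $p$, estimate the ratio of Weierstrass products term by term, and then invoke a Marchenko/Born reconstruction to pass back to $V^{j}$. The paper never touches $\hat X$ after the introduction: it uses Froese's theorem to replace the resonances by the zeros of $F^{j}(z):=\hat V^{j}(2z)\hat V^{j}(-2z)$ and exploits the concrete asymptotic $4z^{2}F^{j}(z)=1+o(1/z)$ (a direct consequence of $V(0)=1$, $V'(0)=0$ via integration by parts) to recognise $z^{4}F^{j}(z)$ as a perturbed exponential sum. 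That lets the paper invoke Cartwright's theorem (so the exponential factor in the factorisation is exactly $e^{2iz}$, with no free polynomial $P_{j}$ to determine), Dickson's theorem on zeros of exponential sums (so the zeros sit in rectangles of uniformly bounded size), and a Duffin--Schaeffer argument (so the large zeros are real). With this in hand the paper runs the argument principle on a thin strip $S_{R}$, shows $N^{1}(R)-N^{2}(R)$ is simultaneously an integer and $O(\delta)$, hence zero, and reads off closeness of $|\hat V^{1}|^{2}$ and $|\hat V^{2}|^{2}$ from coincidence of the truncated canonical products. What your approach would buy is generality; what the paper's approach buys is that the normalisations $V(0)=1$, $V'(0)=0$ feed straight into the leading term of the exponential sum, so the exponential factor and the zero geometry are fixed from the outset and no separate perturbation analysis of $P_{j}$ is needed.

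There is, however, a real gap in your outline exactly where you flag it. The Marchenko/Born step delivers at best $|V^{1}-V^{2}|\le\eta(\delta)\,w(x)$ for some super-exponential weight $w$, but the hypotheses give only $V^{2}(x)\le C_{N}e^{-N|x|}$, an \emph{upper} bound; nothing prevents $V^{2}$ from vanishing at points or from decaying strictly faster than any fixed $w$, so the division ``$V^{2}(x)\asymp w(x)$'' is not justified and the passage to $|V^{1}/V^{2}-1|<\epsilon$ breaks. The paper sidesteps this entirely by never reconstructing $V^{j}$ from $\hat X^{j}$: it stops at closeness of $|\hat V^{j}|^{2}$, which is the quantity Froese's theorem ties directly to the resonances. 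A second, smaller gap: you assume $\hat X^{1}$ and $\hat X^{2}$ share a common finite order $\rho$, but for merely super-exponentially decaying $V$ (e.g.\ a Gaussian) the order of $\hat V$ can exceed $1$ and is not determined by the resonance sequence alone; the paper avoids this by working throughout in the Cartwright (exponential-type) regime, where the order is $1$ and the indicator is $\sigma|\sin\theta|$.
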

There is a straightforward classic result from complex analysis. Let's say
\begin{eqnarray}
&&\hat{V}(z)=\hat{V}^{1}(0)e^{iCz}\prod_{\{z_{n}:{\rm all\, resonances}\}}(1-\frac{z}{z_{n}});\\
&&\hat{V}_{R}(z)=\hat{V}(0)e^{iCz}\prod_{|z_{n}|<R}(1-\frac{z}{z_{n}}).
\end{eqnarray}
Then, $$\lim_{R\rightarrow\infty}\hat{V}_{R}(z)=\hat{V}(z),$$ pointwise under certain regular assumption,
which is the natural stability with respect to the quantity of the resonances due to Cartwright's theory \cite[p.\,251]{Levin}. This meets the physicist's intuition that whenever one collects enough resonances, the laboratory measurement is close to the actual scattering source.

\section{Lemmata}
\begin{lemma}
For real-value potential $V$, $\overline{\widehat{V}(-2k)}=\widehat{V}(2k)$ for real $k$.
\end{lemma}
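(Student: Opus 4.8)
The plan is simply to unwind the definition of the Fourier transform and carry the complex conjugation inside the integral. Write $\widehat{V}(z)=\int V(x)e^{izx}\,dx$, the integration being over the support of $V$ and the integral defining an entire function of $z$ by the super-exponential bound~(\ref{1.4}); the argument is insensitive to the sign convention in the exponent. I would substitute $z=-2k$, take complex conjugates of both sides, and then invoke two elementary facts: $V$ is real-valued by~(\ref{1.2}), so $\overline{V(x)}=V(x)$, and $k$ is real, so $\overline{e^{-2ikx}}=e^{2ikx}$. This yields $\overline{\widehat{V}(-2k)}=\int V(x)e^{2ikx}\,dx=\widehat{V}(2k)$, which is exactly the claim.

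The one step that deserves a line of justification is the interchange of conjugation with integration; this is legitimate because $V\in L^{1}$ by~(\ref{1.4}), so the integrand is absolutely integrable and conjugation, being a continuous $\mathbb{R}$-linear map, commutes with the Lebesgue integral. If one prefers an argument free of any integrability bookkeeping, one can instead observe that $z\mapsto\widehat{V}(z)$ and $z\mapsto\overline{\widehat{V}(\bar z)}$ are both entire (the latter by Schwarz reflection) and coincide on the real axis by the pointwise computation above, hence coincide identically; specializing to $z=2k$ then even upgrades the statement to $\overline{\widehat{V}(-2\bar k)}=\widehat{V}(2k)$ for all complex $k$.

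I do not expect any genuine obstacle here: the lemma is just the standard Hermitian-symmetry property of the Fourier transform of a real function, recorded because it is precisely what makes the product $\widehat{V}(2k)\widehat{V}(-2k)$ appearing in Froese's theorem real and symmetric on $\mathbb{R}$, and what underlies the $L(k)=\overline{R(k)}$-type symmetries of the scattering matrix~(\ref{S}). The proof needs nothing beyond the definitions and the reality assumption~(\ref{1.2}).
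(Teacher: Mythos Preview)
Your proposal is correct and follows essentially the same route as the paper: write out the Fourier integral, use that $V$ is real and $k$ is real to pull the conjugation through, and read off the identity. The paper's proof is the one-line version of your argument (with the opposite sign convention in the exponent, which as you note is immaterial); your added remarks on integrability and the Schwarz-reflection upgrade are sound but not needed for the lemma as stated.
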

\begin{proof}
Note that $\widehat{V}(2k)=\int_{0}^{1}V(x)e^{{-2ikx}}dx$. Thus, $$\widehat{V}(2k)=\int_{0}^{1}V(x)e^{{-2ikx}}dx=\overline{\int_{0}^{1}V(x)e^{{2ikx}}dx}=\overline{\widehat{V}(-2k)},$$for real $k$. This proves the lemma.
\end{proof}
\begin{lemma}\label{Froese}
The zeros of $\widehat{V}(2k)\widehat{V}(-2k)$ are the resonances of~(\ref{1.1}) asymptotically. 
\end{lemma}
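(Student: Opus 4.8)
The plan is to obtain Lemma~\ref{Froese} as a direct consequence of Froese's theorem \cite[Theorem\,1.3]{Froese}, already quoted in the Introduction: for a super-exponentially decreasing potential satisfying Froese's Hypothesis~5.1 \cite[p.\,261]{Froese}, the resonances of~(\ref{1.1}) --- that is, the zeros $\{\sigma_{j}\}$ of $\hat{X}$ --- are asymptotically distributed as the zeros of $\hat{V}(2k)\hat{V}(-2k)$. Thus the work reduces to two steps: (i) checking that a potential obeying~(\ref{1.2}), the normalization $V(0)=1$, $V'(0)=0$, and~(\ref{1.4}) is admissible for that theorem; and (ii) making precise the sense of ``asymptotically'' in the conclusion.

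For step~(i) I would argue as follows. Assumption~(\ref{1.4}) makes $V$ super-exponentially decreasing, and since $\hat{V}(2k)=\int_{0}^{1}V(x)e^{-2ikx}\,dx$, the perturbation is, for the purpose of the resonance analysis, effectively compactly supported with left endpoint $x=0$; assumption~(\ref{1.2}) supplies the $\mathcal{C}^{2}$ regularity and the reality of $V$, so by the preceding lemma $\hat{V}(2k)\hat{V}(-2k)=|\hat{V}(2k)|^{2}$ on $\mathbb{R}$ and its zero set is symmetric under $k\mapsto-\bar{k}$. The one genuinely non-trivial point is the non-degeneracy that Hypothesis~5.1 imposes at the endpoints of $\mathrm{supp}\,V$: at $x=0$ it is guaranteed by $V(0)=1\neq0$, while $V'(0)=0$ and the $\mathcal{C}^{2}$ smoothness fix the leading terms of the expansion of $\hat{V}(2k)$ at that edge. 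Granting that these data verify Hypothesis~5.1, \cite[Theorem\,1.3]{Froese} applies to $V$.

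For step~(ii): both $\hat{X}(k)$ and $\hat{V}(2k)\hat{V}(-2k)$ are entire functions of exponential type governed by the length of $\mathrm{supp}\,V$, hence of order one with regularly distributed zeros in the sense of \cite[p.\,251]{Levin}. Froese's theorem then gives that the resonance sequence $\Sigma=\{z_{n}\}$ and the zero set $\{w_{n}\}$ of $\hat{V}(2k)\hat{V}(-2k)$ have the same counting-function asymptotics and the same limiting density, so that after ordering by modulus the two sequences may be paired with $|z_{n}-w_{n}|\to 0$ as $n\to\infty$; the finitely many low-lying resonances and $L^{2}$-eigenvalues are exactly those left unspecified by such an asymptotic statement, which is what the word ``asymptotically'' in the lemma is meant to allow. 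Combining (i) and (ii) yields the lemma.

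The hard part --- and the reason the lemma is stated as a borrowed fact rather than proved in place --- lies inside \cite{Froese}: the modified Fredholm-determinant (Born-series) representation of $\hat{X}(k)$, together with the stationary-phase analysis at the endpoints of $\mathrm{supp}\,V$ showing that the higher Born terms displace the zeros of the leading symbol $\hat{V}(2k)\hat{V}(-2k)$ only at a lower order. I would import that analysis rather than reprove it, and devote the written proof to the hypothesis check of step~(i).
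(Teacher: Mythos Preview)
Your proposal is correct and takes essentially the same approach as the paper: the paper's proof is the single sentence ``This is Froese's theorem \cite[Theorem\,1.3]{Froese},'' so your plan to invoke that result directly is exactly what the paper does. Your steps~(i) and~(ii) add hypothesis-checking and an explication of ``asymptotically'' that the paper omits entirely (indeed, Theorem~\ref{11} simply \emph{assumes} Hypothesis~5.1 rather than verifying it from~(\ref{1.2})--(\ref{1.4})), so if anything you are supplying more than the paper asks for.
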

\begin{proof}
This is Froese's theorem \cite[Theorem\,1.3]{Froese}.
\end{proof}
\begin{theorem}[Cartwright]\label{Cartwright}
Let $f$ be an entire function of exponential type with zero set $\{a_{k}\}$. We assume $f$ satisfies one of the
following conditions:
\begin{equation}\nonumber
\mbox{ the integral
}\int_{-\infty}^\infty\frac{\ln^+|f(x)|}{1+x^2}dx\mbox{ exists}.
\end{equation}
\begin{equation}\nonumber
|f(x)|\mbox{ is bounded on the real axis}.
\end{equation}
Then,
\begin{enumerate}
\item all of the zeros of the function $f(z)$, except possibly
those of a set of zero density, lie inside arbitrarily small
angles $|\arg z|<\epsilon$ and $|\arg z-\pi|<\epsilon$, where the
density
\begin{equation}\label{3.9}
\Delta_f(-\epsilon,\epsilon)=\Delta_f(\pi-\epsilon,\pi+\epsilon)=\lim_{r\rightarrow\infty}
\frac{N(f,-\epsilon,\epsilon,r)}{r}
=\lim_{r\rightarrow\infty}\frac{N(f,\pi-\epsilon,\pi+\epsilon,r)}{r},
\end{equation}
is equal to $\frac{d}{2\pi}$, where $d$ is the width of the
indicator diagram \cite{Boas} and \cite[p.\,251]{Levin}. Furthermore, the limit
$\delta=\lim_{r\rightarrow\infty}\delta(r)$ exists, where
$$
\delta(r):=\sum_{\{|a_k|<r\}}\frac{1}{a_k};
$$
\item moreover,
\begin{equation}\nonumber
\Delta_f(\epsilon,\pi-\epsilon)=\Delta_f(\pi+\epsilon,-\epsilon)=0;
\end{equation}
\item the function $f(z)$ can be represented in the form
\begin{equation}\nonumber
f(z)=cz^me^{i\kappa
z}\lim_{r\rightarrow\infty}\prod_{\{|a_k|<r\}}(1-\frac{z}{a_k}),
\end{equation}
where $c,m,\kappa$ are constants and $\kappa$ is real;
\item the indicator
function of $f$ is of the form
\begin{equation}
h_f(\theta)=\sigma|\sin\theta|.
\end{equation}
\end{enumerate}
\end{theorem}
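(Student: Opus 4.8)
The plan is to prove the four conclusions in the order (4), then (1)--(2), then (3), and finally the existence of $\delta$, because the precise shape of the indicator function drives every subsequent statement. I first observe that the two hypotheses are not independent: boundedness of $|f|$ on the real axis makes $\ln^{+}|f(x)|$ bounded, hence trivially integrable against $(1+x^{2})^{-1}$, so it suffices to work under the logarithmic-integrability condition. Under that condition together with the exponential-type bound $\ln|f(re^{i\theta})|\le \sigma r+o(r)$, I would apply the Phragm\'en--Lindel\"of principle separately in $\{\Im z>0\}$ and $\{\Im z<0\}$ to conclude that $f$ is of bounded type (belongs to the Nevanlinna class) in each half-plane. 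This is the gateway to all the half-plane machinery.

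Second, for conclusion (4) I would invoke the Ahlfors--Heins theorem for functions of bounded type in a half-plane. It yields $\lim_{r\to\infty} r^{-1}\ln|f(re^{i\theta})|=\sigma\sin\theta$ for almost every $\theta\in(0,\pi)$, and the symmetric statement in the lower half-plane; the finiteness of $\int \ln^{+}|f(x)|(1+x^{2})^{-1}\,dx$ is exactly what kills the Poisson/boundary contribution so that only the linear term survives. Matching the two half-planes along the real axis, and using conjugate symmetry $\overline{f(\bar z)}=f(z)$ in the real-coefficient setting to equalise the upper and lower types, gives $h_{f}(\theta)=\sigma|\sin\theta|$, so the indicator diagram is the vertical segment $[-i\sigma,i\sigma]$ of width $d=2\sigma$.

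Third, with the indicator pinned down I would count zeros by combining Jensen's formula with a Carleman-type formula on the upper half-disk. Carleman's identity relates $\sum_{\Im a_{k}>0}\bigl(|a_{k}|^{-1}-|a_{k}|r^{-2}\bigr)\sin(\arg a_{k})$ to boundary integrals of $\ln|f|$; since $\ln|f(x)|$ is integrable against $(1+x^{2})^{-1}$ while $\ln|f(iy)|\sim\sigma y$, one extracts that $\sum_{k}|\Im a_{k}|\,|a_{k}|^{-2}<\infty$, forcing the zeros to cluster at the rays $\arg z=0,\pi$. Jensen's formula then gives $n(r)=O(r)$ and the sharp count $N(f,-\epsilon,\epsilon,r)/r\to d/(2\pi)$, which establishes (1); since the entire Jensen mass is absorbed by these two sectors, the complementary density in (2) must vanish.

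Finally, for (3) and the existence of $\delta$: because the convergence exponent of $\{a_{k}\}$ is at most $1$, Hadamard factorization yields $f(z)=cz^{m}e^{\alpha z}\lim_{r\to\infty}\prod_{|a_{k}|<r}(1-z/a_{k})$, and the Cartwright condition forces $\Re\alpha=0$ (a nonzero real part would produce real-axis growth violating log-integrability), so $\alpha=i\kappa$ with $\kappa$ real. Writing $a_{k}^{-1}=\Re a_{k}\,|a_{k}|^{-2}-i\,\Im a_{k}\,|a_{k}|^{-2}$, the imaginary parts sum absolutely by the Carleman estimate above, while the convergence of $\sum_{|a_{k}|<r}\Re a_{k}\,|a_{k}|^{-2}$ follows from the symmetric clustering of the zeros about the real axis; together these give $\delta=\lim_{r\to\infty}\delta(r)$. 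The main obstacle is precisely this last passage from the indicator to sharp zero statistics: upgrading $n(r)=O(r)$ to the exact density $d/(2\pi)$ and, above all, proving that the partial sums $\delta(r)$ converge rather than merely stay bounded. This is the heart of the Levin--Pfluger theory of completely regular growth and requires a careful symmetrization of the angular distribution of the zeros; the Ahlfors--Heins and Carleman formulas supply the analytic engine, but the measure-theoretic control of $\{\arg a_{k}\}$ is where the real work lies.
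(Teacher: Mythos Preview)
The paper does not actually prove this theorem: its entire proof reads ``We refer the statements, notations, and details to Levin \cite[p.\,251]{Levin}.'' So there is no argument in the paper to compare against, only the cited source. Your outline is in fact a faithful sketch of how the classical proof in Levin (and Boas) proceeds: one establishes Cartwright-class membership via the logarithmic integral, deduces the indicator by an Ahlfors--Heins/Phragm\'en--Lindel\"of argument in each half-plane, controls the angular distribution of zeros through Carleman's formula combined with Jensen's theorem, and then reads off the Hadamard product with purely imaginary exponent from the real-axis growth restriction. You also correctly flag the genuinely hard step as upgrading $n(r)=O(r)$ to the exact density and proving convergence (not mere boundedness) of $\delta(r)$.

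One caveat worth noting: to match the upper and lower half-plane types into a single constant $\sigma$ you appeal to ``conjugate symmetry $\overline{f(\bar z)}=f(z)$ in the real-coefficient setting,'' but no such hypothesis is present in the statement. In the general Cartwright theorem the indicator is $h_f(\theta)=\sigma_{+}\sin\theta$ on $(0,\pi)$ and $-\sigma_{-}\sin\theta$ on $(-\pi,0)$ with possibly $\sigma_{+}\neq\sigma_{-}$, and the width of the indicator diagram is $d=\sigma_{+}+\sigma_{-}$. The paper's formulation (4) with a single $\sigma$ is itself a mild informality; conclusions (1)--(3) and the existence of $\delta$ go through without any reality assumption on $f$, so your sketch stands once you drop that shortcut and carry the two constants through.
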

\begin{proof}
We refer the statements, notations, and details to Levin \cite[p.\,251]{Levin}.
\end{proof}
\begin{corollary}
The Fourier transform $\widehat{V}(k)$ is of order one and of completely regular growth.
\end{corollary}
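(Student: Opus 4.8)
The plan is to deduce the corollary directly from the Paley--Wiener theorem together with the Cartwright theorem just recorded. Following the convention of the preceding lemmata, write $\widehat{V}(k)=\int_{0}^{1}V(x)e^{-ikx}\,dx$ with $V\in\mathcal{C}^{2}$ real-valued; since the integration is over a bounded interval, $\widehat{V}$ extends to an entire function obeying $|\widehat{V}(k)|\le\|V\|_{L^{1}[0,1]}\,e^{|\Im k|}$, so $\widehat{V}$ is of exponential type at most one and in particular of order at most one. Restricting to real $k$ gives $|\widehat{V}(k)|\le\|V\|_{L^{1}[0,1]}$, so $\widehat{V}$ is bounded on the real axis; this is precisely the second hypothesis of Theorem \ref{Cartwright} (and by Riemann--Lebesgue the first one holds as well).

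Next I would check that the order is not strictly less than one. Evaluating along the positive imaginary axis, $\widehat{V}(iy)=\int_{0}^{1}V(x)e^{yx}\,dx$, and a Laplace/Watson-lemma estimate shows the integral is governed by a neighbourhood of the right endpoint $b$ of $\operatorname{supp}V$; since $V\not\equiv 0$ (indeed $V(0)=1$) and $V\in\mathcal{C}^{2}$, it cannot vanish to infinite order there, so $\log|\widehat{V}(iy)|\sim b\,y$ up to lower-order terms with $b>0$. Hence $\widehat{V}$ has positive exponential type and therefore is of order exactly one. Equivalently, this is the sharpness assertion in the Paley--Wiener theorem: the exponential type of $\widehat{V}$ equals the radius of the convex hull of $\operatorname{supp}V$, which is positive.

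With $\widehat{V}$ entire of exponential type and bounded on $\mathbb{R}$, Theorem \ref{Cartwright} applies: conclusion (3) yields the factorization $\widehat{V}(z)=cz^{m}e^{i\kappa z}\lim_{r\to\infty}\prod_{|a_{k}|<r}(1-z/a_{k})$ with $\kappa$ real, conclusion (1) the existence of the zero-density and of $\lim_{r}\delta(r)$, and conclusion (4) the indicator $h_{\widehat{V}}(\theta)=\sigma|\sin\theta|$. An entire function of exponential type lying in the Cartwright class and having an indicator of this form is of completely regular growth in Levin's sense; this is the content of the corresponding result in \cite[p.\,251]{Levin}. The only genuinely delicate step is this last one -- matching the Cartwright-class hypotheses to the definition of completely regular growth -- and for that I would simply invoke Levin rather than reproduce the proof; the order-one statement and the boundedness on the real axis are routine.
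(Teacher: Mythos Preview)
Your argument is sound and aligns with the paper's (implicit) approach: the paper states this corollary immediately after Cartwright's theorem with no proof whatsoever, so the intended justification is exactly what you spell out---$\widehat{V}$ is entire of exponential type and bounded on the real line, hence lies in the Cartwright class, and Levin \cite[p.\,251]{Levin} then yields completely regular growth. One minor remark: the Watson-lemma step is fragile because a $\mathcal{C}^{2}$ potential can vanish to infinite order at the right edge of its support, but your fallback to the sharp Paley--Wiener statement (type equals the half-width of the support) already handles this cleanly and makes the Laplace asymptotics unnecessary.
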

Now we consider the following expansion method of Fourier transform.
\begin{lemma}\label{2100}
If $\phi(t)$ is $\mathcal{C}^{N}[\alpha,\beta]$, then
$$\int_{\alpha}^{\beta}\phi(t)e^{ixt}dt=B_{N}(x)-A_{N}(x)+R_{N}(x),$$
where 
\begin{eqnarray*}
&&A_{N}(x)=\sum_{n=0}^{N-1}i^{n-1}\phi^{(n)}(\alpha)x^{{-n-1}}e^{ix\alpha};\nonumber\\
&&B_{N}(x)=\sum_{n=0}^{N-1}i^{n-1}\phi^{(n)}(\beta)x^{{-n-1}}e^{ix\beta};\nonumber\\
&&R_{N}(x)=(-ix)^{{-N}}\int_{\alpha}^{\beta}e^{ixt}\phi^{(N)}(t)dt=o(\frac{1}{|x|^{N}}),\mbox{ as }x\rightarrow\infty.
\end{eqnarray*}
\end{lemma}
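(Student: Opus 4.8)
The plan is to establish the identity by integrating by parts $N$ times and then to control the remainder with the Riemann--Lebesgue lemma.

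First I would perform one integration by parts in $\int_{\alpha}^{\beta}\phi(t)e^{ixt}\,dt$, using the antiderivative $e^{ixt}/(ix)$ of $e^{ixt}$; this produces the boundary term $\tfrac{1}{ix}\bigl[\phi(t)e^{ixt}\bigr]_{\alpha}^{\beta}$ together with $-\tfrac{1}{ix}\int_{\alpha}^{\beta}\phi'(t)e^{ixt}\,dt$. Iterating $N$ times, a short induction shows that after the $n$-th step the surviving integral carries the prefactor $(-1)^{n}(ix)^{-n}$ and involves $\phi^{(n)}$, while the $n$-th step has deposited the boundary contribution $(-1)^{n}(ix)^{-(n+1)}\bigl[\phi^{(n)}(t)e^{ixt}\bigr]_{\alpha}^{\beta}$ for $n=0,\dots,N-1$. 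The only arithmetic required is the identity $(-1)^{n}(ix)^{-(n+1)}=i^{n-1}x^{-n-1}$, which is immediate from $(-1)^{n}=i^{2n}$; with it, the boundary contributions evaluated at $\beta$ sum precisely to $B_{N}(x)$ and those evaluated at $\alpha$ to $-A_{N}(x)$. The integral left over after the $N$-th step is $(-1)^{N}(ix)^{-N}\int_{\alpha}^{\beta}\phi^{(N)}(t)e^{ixt}\,dt$, and the same identity in the form $(-1)^{N}(ix)^{-N}=(-ix)^{-N}$ identifies it with $R_{N}(x)$, which yields the displayed expansion.

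It then remains to verify $R_{N}(x)=o(|x|^{-N})$ as $x\to\infty$ along the reals. Since $\phi\in\mathcal{C}^{N}[\alpha,\beta]$, the derivative $\phi^{(N)}$ is continuous on the compact interval $[\alpha,\beta]$, hence integrable there, so the Riemann--Lebesgue lemma gives $\int_{\alpha}^{\beta}\phi^{(N)}(t)e^{ixt}\,dt\to 0$ as $|x|\to\infty$; combined with $|(-ix)^{-N}|=|x|^{-N}$ this gives $|R_{N}(x)|=|x|^{-N}\cdot o(1)=o(|x|^{-N})$. I do not foresee any real obstacle: the only things needing care are the bookkeeping of the factors of $i$ and $x$ through the iteration and the remark that mere continuity of $\phi^{(N)}$ already suffices for the Riemann--Lebesgue step, so nothing beyond the stated $\mathcal{C}^{N}$ regularity is used. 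If one prefers to avoid invoking Riemann--Lebesgue, the same $o(1)$ decay follows directly from the uniform continuity of $\phi^{(N)}$ by splitting $[\alpha,\beta]$ into subintervals of length of order $1/|x|$ and estimating the oscillation of $e^{ixt}$ on each.
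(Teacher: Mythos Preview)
Your proof is correct and follows precisely the approach indicated in the paper, which simply states that the result is obtained by integration by parts and refers to Erd\'elyi \cite[p.\,47]{Erdelyi} for details. Your write-up in fact supplies those details, including the bookkeeping $(-1)^{n}(ix)^{-(n+1)}=i^{n-1}x^{-n-1}$ and the Riemann--Lebesgue argument for the $o(|x|^{-N})$ remainder, so nothing further is needed.
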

\begin{proof}
The proof is done by integration by parts, and we refer the details to Erdelyi \cite[p.\,47]{Erdelyi}.
\end{proof}
\begin{lemma}
The following asymptotic holds for real $k$:
\begin{eqnarray}\label{VV}
4z^{2}\hat{V}(2z)\hat{V}(-2z)=1+o(\frac{1}{z}),
\end{eqnarray}
in which all little o-terms are analytic.
\end{lemma}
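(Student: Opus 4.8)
The key is that $V \in C^2([0,\infty))$, compactly supported essentially in $[0,1]$ (super-exponential decay, and the integrals are written over $[0,1]$), with the normalizations $V(0) = 1$, $V'(0) = 0$. So I apply Lemma 2.7 (the Erdélyi integration-by-parts expansion) to $\phi = V$ on $[\alpha,\beta] = [0,1]$ with $N = 2$.

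Let me think about what that gives. We have $\hat{V}(2z) = \int_0^1 V(t) e^{-2izt}\,dt$. To match the form in Lemma 2.7, set $x = -2z$, so $\hat{V}(2z) = \int_0^1 V(t) e^{ixt}\,dt = B_2(x) - A_2(x) + R_2(x)$ with
$$A_2(x) = i^{-1}V(0)x^{-1}e^{0} + i^{0}V'(0)x^{-2}e^{0} = -i V(0)/x + V'(0)/x^2,$$
$$B_2(x) = -i V(1) e^{ix}/x + V'(1)e^{ix}/x^2.$$
Using $V(0) = 1$, $V'(0) = 0$: $A_2(x) = -i/x$. The boundary term $B_2(x)$ at $t=1$ carries the factor $e^{ix} = e^{-2iz}$, and for real $z$ this is bounded, so $B_2(x) = O(1/z)$. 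And $R_2(x) = o(1/x^2) = o(1/z^2)$. Hence
$$\hat{V}(2z) = \frac{i}{2z} + O\!\left(\frac{1}{z}\right) \cdot e^{-2iz} + o\!\left(\frac{1}{z^2}\right).$$

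Hmm — wait. I need to be more careful. $B_2(x)$ is $O(1/z)$, not $o(1/z)$, so naively $\hat V(2z) = i/(2z) + O(1/z)$, which is useless. The point must be that when I form the \emph{product} $\hat V(2z)\hat V(-2z)$, the leading terms are $(i/(2z))(-i/(2z)) = 1/(4z^2)$ (note $\hat V(-2z)$ has leading term $-i/(2z)$ since the sign of $x$ flips), so $4z^2 \hat V(2z)\hat V(-2z)$ has leading term $1$. The cross terms of size $O(1/z) \cdot O(1/z) = O(1/z^2)$ contribute, after multiplying by $4z^2$, terms of size $O(1)$ — still too big! So the genuine subtlety is that I must track the $e^{\pm 2iz}$ boundary contributions more precisely: the $t=1$ boundary terms are $-iV(1)e^{-2iz}/(2z) + \cdots$ in $\hat V(2z)$ and $-iV(1)e^{+2iz}/(2z) + \cdots$ in $\hat V(-2z)$ — wait, let me recheck the sign — in $\hat V(-2z)$ we set $x = 2z$, so $B_2 = -iV(1)e^{2iz}/(2z)$. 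The products of a $t=0$ term with a $t=1$ term produce $e^{\pm 2iz}/z^2$, which times $4z^2$ is $O(e^{\pm 2iz}) = O(1)$ on the real axis — genuinely problematic.

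So the real plan: push the Erdélyi expansion one order further (take $N = 3$, using that we may treat $V$ as $C^2$ with enough decay, or observe $V \in C^2$ suffices with $R_2 = o(1/x^2)$ and then argue the boundary terms oscillate). The honest resolution is that \textbf{the leading boundary contributions at $t=1$ must cancel in the product up to $o(1/z)$.} Actually the cleanest route: $4z^2 \hat V(2z)\hat V(-2z) = (2z\hat V(2z))(2z\hat V(-2z))$; from the expansion $2z\hat V(2z) = i + (\text{terms involving } e^{-2iz}/z) + o(1/z)$ and $2z\hat V(-2z) = -i + (\text{terms involving } e^{2iz}/z) + o(1/z)$, so the product is $1 + i\cdot(e^{2iz}\text{-stuff})/z - i\cdot(e^{-2iz}\text{-stuff})/z + o(1/z)$. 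Each surviving term is genuinely $O(1/z)$ on the real line — so the claimed $1 + o(1/z)$ would be \emph{false} unless there's additional cancellation I'm missing, OR the intended reading is that $z$ ranges over the real axis where $|e^{\pm 2iz}| = 1$ and the statement absorbs these into the product structure differently. I would resolve this by writing $4z^2\hat V(2z)\hat V(-2z) = 1 + \frac{g(z)}{z} + o(1/z)$ with $g$ bounded, and \textbf{the main obstacle is showing $g \equiv 0$}, which should follow from the first-order boundary terms at $t=1$ being purely imaginary multiples of $V(1)$ that enter the two factors with conjugate phases and opposite signs — I'd verify this cancellation by direct substitution, being careful with the $i^{n-1}$ factors and the sign of $x$ in each of the two Fourier transforms. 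If the cancellation genuinely fails, the statement should be read with the little-$o$ replaced by big-$O$, and I would prove that weaker (still sufficient) version, since all that is used downstream is that $4z^2\hat V(2z)\hat V(-2z) \to 1$.

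**Concrete step list.** First, apply Lemma~\ref{2100} with $\phi = V$, $[\alpha,\beta] = [0,1]$, $N = 2$, once with $x = -2z$ and once with $x = +2z$, obtaining expansions for $2z\hat V(2z)$ and $2z\hat V(-2z)$ each of the form $\pm i + (\text{boundary terms at } t=1) + o(1/z)$ after imposing $V(0)=1$, $V'(0)=0$. Second, multiply the two expansions and collect: the $\pm i \cdot \mp i = 1$ term, the order-$1/z$ terms, and the $o(1/z)$ remainder; note all remainders are analytic since they are contour integrals of $V^{(2)}$ against analytic kernels. Third, examine the order-$1/z$ terms, which are combinations of $V(1)e^{\pm 2iz}$; show they cancel (or, failing that, bound them and record the $O(1/z)$ statement). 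Fourth, conclude. I expect step three — the cancellation of the $t=1$ boundary terms — to be the crux; everything else is bookkeeping with the Erdélyi formula.
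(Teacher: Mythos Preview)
Your approach is the same as the paper's---apply the Erd\'elyi integration-by-parts expansion (Lemma~\ref{2100}) with $N=2$ and use the normalizations $V(0)=1$, $V'(0)=0$---but you have manufactured a difficulty that the paper avoids. The paper's proof integrates over $[0,\infty)$, not $[0,1]$: it simply writes $\int_0^\infty e^{-izx}V(x)\,dx = \frac{1}{iz} + o(1/z^2)$. The super-exponential decay hypothesis~(\ref{1.4}) kills all boundary contributions at the upper endpoint, so only the $t=0$ terms survive; with $V(0)=1$ and $V'(0)=0$ this gives $\hat V(z) = 1/(iz) + o(1/z^2)$ directly, whence $\hat V(z)\hat V(-z) = 1/z^2 + o(1/z^3)$ and the claim follows in one line. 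No cancellation argument is needed.

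Your worry is not groundless, however: the $\int_0^1$ appearing earlier in the paper is inconsistent with the $\int_0^\infty$ used here, and if $V$ really were supported on $[0,1]$ with $V(1)\neq 0$ then your analysis is correct---the $e^{\pm 2iz}/z$ boundary terms do \emph{not} cancel in the product, there are genuine $O(1/z)$ oscillatory contributions, and the $o(1/z)$ claim would fail. (Indeed, Lemma~\ref{L29} later writes out exactly such exponential terms, so the paper is not internally consistent on this point.) For the statement as the paper proves it, take the domain of integration to be $[0,\infty)$ and invoke~(\ref{1.4}); then your steps one, two, and four go through and step three becomes vacuous.
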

\begin{proof}
Using previous lemma for $N=2$, we obtain $$\int_{0}^{\infty}e^{-izx}V(x)dx=\frac{1}{iz}+o(\frac{1}{z^{2}}).$$
Hence, $$\hat{V}(z)\hat{V}(-z)=\frac{1}{z^{2}}+o(\frac{1}{z^{3}}).$$
This proves the lemma.
\end{proof}
Most importantly, we now apply the Wilder's type of theorems for
exponential polynomials
\cite{Dickson,Dickson2,Levin}. 
We consider the entire functions in the form
\begin{equation}\label{sum}
f(z)=\sum_{j=1}^nA_jz^{m_j}[1+\epsilon(z)]e^{\omega_jz},\,z\in\mathbb{C},
\end{equation}
where $n>1$, $A_j$, and $\omega_j$ are complex numbers such that
$A_j\neq0$ and the $\omega_j$ are distinct; the $m_j$ are
non-negative integers; the functions $\epsilon$ are analytic for
$|z|\geq r_0\geq0$ with $\lim_{z\rightarrow\infty}\epsilon(z)=0$.
When we are talking about the zeros of $f(z)$, we are referring to
its zeros outside certain open ball around the origin.
\par
First of all, we set up
the following quantities to the $f(z)$ in~(\ref{sum}): Let $Q$ be
the \textbf{broken line} given by the $\overline{\omega}_j$ given
in~(\ref{sum}) with
$\overline{\omega}_1,\ldots,\overline{\omega}_\sigma$ as its
vertices. We label the indices counterclockwise. Let $L_k$ be
the line segment $[\overline{\omega}_k,\overline{\omega}_{k+1}]$
and
$$\phi_k:=\arg\{\overline{\omega}_k-\overline{\omega}_{k+1}\},$$ in
$[-\frac{\pi}{2},\frac{3\pi}{2})$. Let
\begin{equation}\label{EK}
e_k=e^{i\phi_k}.
\end{equation}
Certain $\overline{\omega}_p$ on $L_k$ are assigned doubly indexed
subscripts as follows: let the convex hull of
$\overline{\omega}_{k},\,\overline{\omega}_{k+1}$ and
\begin{equation}\label{tau}
\tau_p=\overline{\omega}_p+im_pe_k,
\end{equation}
in which
$\overline{\omega}_p$ on $L_k$; assign subscripts
$j=1,\cdots,\sigma_k$ to $\omega_{kj}$ so that
$\omega_{k1}=\omega_k,\,\omega_{k\sigma_k}=\omega_{k+1}$ and
$\tau_{kj}$ are vertices of this convex hull and preceding in a
counterclockwise direction from $\overline{\omega}_k+im_ke_k$ to
$\overline{\omega}_{k+1}+im_{k+1}e_k$. Whenever the $\tau_p$ is doubly-indexed, the $m_p$ has to be indexed instantaneously. For
$j=1,\cdots,\sigma_{k}-1$,
$$L_{kj}:=[\tau_{kj},\tau_{kj+1}];$$
\begin{equation} \label{mu}
\mu_{kj}:=\frac{m_{kj}-m_{kj+1}}{(\omega_{kj}-\omega_{kj+1})e_k},
\end{equation}
which is real and $n_{kj}$ is the number of $\tau_p$ on $L_{kj}$.
We refer to a comprehensive graph for the zeros of~(\ref{sum}) to \cite[p.\,10]{Dickson2}.
\par
Moreover, for  $j=1,\cdots,\sigma_{k}-1$ and $H>0$, we define
\begin{equation}\label{V}
V_{kj}(H):=\{z|\,\Im(z/e_k)\geq0,\,|\Re(z/e_k)+\mu_{kj}\log|z||\leq
H\};
\end{equation}
$T_k(\theta)$ is defined to be a closed sector with vertex at zero
of opening $2\theta$ about the outward normal to $L_k$ through the
origin. For the same $k$ and $j$ and each triple of reals
$(\alpha,s,H)$, $s>0$ and $H>0$, the set
\begin{equation}\nonumber
R_{kj}(\alpha,s,H):=\{z|\,\Im(z/e_k)+\mu_{kj}\arg
z\in[\alpha,\alpha+s],\,|\Re(z/e_k)+\mu_{kj}\log|z||\leq H\},
\end{equation}
where $\arg z\in(\phi_k,\phi_k+\pi)$. It's a curvilinear tubular
neighborhood along some direction, and we can infer that $R_{kj}(\alpha,s,h)$ is in $V_{kj}(h)\cap T_k(\theta)$.
Let us collect a few facts from \cite{Dickson,Dickson2}.
\begin{lemma}\label{214}
The following properties hold.
\begin{enumerate}
\item The size of $R_{kj}(\alpha,s,H)$is approximately a rectangle
of dimension $s$ by $2H$ for large $\alpha$. \item $V_{kj}$ is
individually connected and disjoint to each other for each pair of
$(k,j)$ when $|z|$ is large. \item The boundaries of $V_{kj}$ are
logarithmic to the outward normal to $L_k$ to the exterior of $Q$.
\item For each fixed $\theta,\,H$, the subsets of $V_{kj}(H)$ are
in $T_k(\theta)$ for large $z$. \item For fixed $\theta$ and $H$,
\begin{equation}
 R_{kj}(\alpha,s,H)\subset V_{kj}\cap T_k(\theta),\,\forall
 s>0,\mbox{ for large }\alpha.
\end{equation}
\end{enumerate}
\end{lemma}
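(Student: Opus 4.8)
The plan is to pass to the rotated variable $w:=z/e_{k}$, rewrite every set in terms of $w=u+iv$ with $u=\Re(z/e_{k})$ and $v=\Im(z/e_{k})$, and then observe that each assertion reduces to an elementary asymptotic statement in which the curvature of the regions enters only through the slowly varying quantities $\log|z|$ and $\arg z$. Since $|e_{k}|=1$ we have $|z|=|w|$ and $\arg z=\arg w+\phi_{k}$, so
\[
V_{kj}(H)=\Bigl\{\,v\geq0,\ \left|u+\mu_{kj}\log|w|\right|\leq H\,\Bigr\},
\]
\[
R_{kj}(\alpha,s,H)=\Bigl\{\,v+\mu_{kj}(\arg w+\phi_{k})\in[\alpha,\alpha+s],\ \left|u+\mu_{kj}\log|w|\right|\leq H,\ \arg w\in(0,\pi)\,\Bigr\};
\]
and, because $e_{k}$ carries $L_{k}$ to a horizontal segment and its outward normal to the positive imaginary direction (consistent with the half-plane $v\geq0$ in $V_{kj}$), $T_{k}(\theta)$ becomes the sector $\{\,|\arg w-\tfrac{\pi}{2}|\leq\theta\,\}$ with vertex at the origin.

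For parts (1) and (3) I would argue as follows. On $R_{kj}(\alpha,s,H)$ the bound $\left|u+\mu_{kj}\log|w|\right|\leq H=O(1)$ forces $|u|=O(\log|w|)$, hence $|w|=v(1+o(1))$ and $v=\alpha+O(\log\alpha)$ as $\alpha\to\infty$; feeding this back, $\arg w\to\tfrac{\pi}{2}$, the $u$-slice at any admissible $v$ is an interval of length $2H+o(1)$ centered near $-\mu_{kj}\log\alpha$, and $v$ sweeps an interval of length $s+o(1)$. Thus, up to an $o(1)$ error and a bounded translation, $R_{kj}(\alpha,s,H)$ is the rectangle $[-H,H]\times[0,s]$, which is (1). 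For (3), the two boundary arcs of $V_{kj}(H)$ are the graphs $u=-\mu_{kj}\log|w|\pm H$; along them $|w|\to\infty$, so they deviate from the straight edges $u=\pm H$ (normal to $L_{k}$) by the logarithmic term $-\mu_{kj}\log|w|$, which is exactly the claimed logarithmic profile lying on the exterior of $Q$.

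Parts (4) and (5) are then immediate. If $z\in V_{kj}(H)$ with $|z|$ large, then $|u|\leq H+|\mu_{kj}|\log|w|=o(|w|)$ while $v\geq0$, so $\arg w\to\tfrac{\pi}{2}$ and $z\in T_{k}(\theta)$ once $|z|$ exceeds a threshold depending only on $\theta,H,\mu_{kj}$; this is (4). For (5), on $R_{kj}(\alpha,s,H)$ the inequality $v+\mu_{kj}(\arg w+\phi_{k})\geq\alpha$ with $\arg w\in(0,\pi)$ gives $v\geq\alpha-C$ for some $C=C(\mu_{kj},\phi_{k})$, hence $v\geq0$ for large $\alpha$; since the remaining constraint is the one defining $V_{kj}(H)$, we get $R_{kj}(\alpha,s,H)\subset V_{kj}(H)$, and then $R_{kj}(\alpha,s,H)\subset V_{kj}(H)\cap T_{k}(\theta)$ by (4), for every $s>0$.

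The step I expect to be the real obstacle is (2). Connectedness of $V_{kj}$ for $|z|$ large should follow because, for fixed $v\geq0$, the function $u\mapsto u+\tfrac{\mu_{kj}}{2}\log(u^{2}+v^{2})$ has derivative tending to $1$ and is therefore eventually strictly increasing, so the $u$-slice of $V_{kj}(H)$ is a single interval and $V_{kj}(H)\cap\{|z|>r_{1}\}$ lies between two continuous graphs over $v$. Disjointness of distinct $V_{kj}$ for large $|z|$ comes from the fact that the segments $L_{kj}$ occupy distinct positions along the outward normal to $L_{k}$ (equivalently the data $\mu_{kj}$ together with the offsets are distinct), so the corresponding logarithmic strips are separated by a fixed positive gap while the logarithmic corrections $\mu_{kj}\log|w|$ are of strictly lower order. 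Making this separation uniform over the finitely many pairs $(k,j)$, and extracting the explicit radius beyond which it holds, is precisely the quantitative core of Dickson's analysis, and for those thresholds I would defer to \cite{Dickson,Dickson2}.
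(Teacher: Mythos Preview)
Your proposal is correct and actually goes well beyond what the paper does: the paper gives no proof of this lemma at all, merely introducing it with the sentence ``Let us collect a few facts from \cite{Dickson,Dickson2}'' and then stating the five items without argument. Your rotation $w=z/e_{k}$ and the subsequent elementary asymptotic analysis is exactly the right way to unpack these facts, and your identification of (2) as the only place requiring genuine work (and your deferral to Dickson for the quantitative thresholds there) matches the spirit of the paper's citation.
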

Let us define
\begin{equation}\nonumber
N_f(R_{kj}(\alpha,s,H)):=\{\mbox{ the numbers of zeros of }f(z)\mbox{ in
}R_{kj}(\alpha,s,H)\},
\end{equation}
where the zeros are counted according to their multiplicity, and we have the
following theorem.
\begin{theorem}[Dickson]\label{Dickson}
Let $f(z)$ be given as in~(\ref{sum}). Then, there exists $H>0$
such that \begin{enumerate}
               \item all but a finite number of zeros of $f$ of modulus
greater than $r_0$ are in $\cup_{k,j}V_{kj}$;
               \item
for each pair
of positive reals $\epsilon$ and $s_0$, there exists an
$\alpha_0=\alpha_0(\epsilon,s_0)$ such that whenever
$\alpha\geq\alpha_0$ and $s\geq s_0$,
\begin{equation}\label{2.9}
\big|N_f(R_{kj}(\alpha,s,H))-s|\omega_{kj+1}-\omega_{kj}|/(2\pi)\big|<n_{kj}-1+\epsilon,
\end{equation}
in which $n_{kj}$ is given in~(\ref{mu}).
\end{enumerate}
\end{theorem}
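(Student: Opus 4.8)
The plan is to localize the zero count to the thin curvilinear strips $V_{kj}$, inside each of which $f$ is approximated by a perturbed two--term (binomial) exponential sum whose zeros can be counted by the argument principle. The scheme is the classical one for zeros of exponential polynomials (Tamarkin--Langer--P\'olya, in the refined form of Dickson), adapted here to allow the analytic perturbation factors $[1+\epsilon(z)]$.

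First I would record the dominance structure on the indicator diagram. Writing $z=|z|e^{i\theta}$, one has $\big|A_jz^{m_j}[1+\epsilon(z)]e^{\omega_jz}\big|\asymp|A_j|\,|z|^{m_j}\exp\!\big(|z|\langle\overline{\omega}_j,u_\theta\rangle\big)$ with $u_\theta=(\cos\theta,\sin\theta)$, so the summand of largest modulus is selected by the support function of the convex hull of $\{\overline{\omega}_j\}$, whose relevant boundary is exactly the broken line $Q$. Two summands are of comparable size only for $u_\theta$ nearly normal to some edge $L_k$, and then the comparison among the exponents lying on $L_k$ is decided, to leading order, by the logarithmic correction produced by the $z^{m_j}$ factors --- which is precisely the defining relation $|\Re(z/e_k)+\mu_{kj}\log|z||\leq H$ of $V_{kj}$. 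From uniform estimates of this kind one gets a lower bound for $|f(z)|$ off $\cup_{k,j}V_{kj}(H)$ once $H$ and $|z|$ are large, because there a single summand strictly dominates the sum of all the others; this proves assertion (1), and also that only finitely many zeros of modulus $>r_0$ lie outside the union.

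Next I would carry out the count inside a window $R_{kj}(\alpha,s,H)\subset V_{kj}(H)\cap T_k(\theta)$, using Lemma~\ref{214}. There the endpoint exponents $\omega_{kj},\omega_{kj+1}$ of $L_{kj}$ dominate, so I would factor $f(z)=A_{kj}z^{m_{kj}}e^{\omega_{kj}z}\big(1+g(z)\big)$, where $g(z)=\dfrac{A_{kj+1}}{A_{kj}}z^{m_{kj+1}-m_{kj}}e^{(\omega_{kj+1}-\omega_{kj})z}\big(1+o(1)\big)$, the $o(1)$ absorbing the $\epsilon$ factors and all exponentially subordinate terms uniformly as $\alpha\to\infty$. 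The zeros of $f$ in the window are the solutions of $g(z)=-1$; the level set $|g(z)|=1$ is to leading order the logarithmic spine of $V_{kj}$, and along it $\arg g$ is strictly monotone with a full $2\pi$ increment over a parameter interval of length $2\pi/|\omega_{kj+1}-\omega_{kj}|$, so the solutions are asymptotically equally spaced with that spacing. Applying the argument principle to $f$ on $\partial R_{kj}(\alpha,s,H)$ --- the variation of $\arg f$ being $o(1)$ on the two logarithmic long sides once $H$ is large, and being $s|\omega_{kj+1}-\omega_{kj}|$ up to $o(1)$ on the two short sides --- then yields~\eqref{2.9} with the $\epsilon$ on the right absorbing all $o(1)$ errors, valid for $\alpha\geq\alpha_0(\epsilon,s_0)$ and $s\geq s_0$.

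The hard part will be the case in which $L_k$ carries more than two of the $\overline{\omega}_j$, so that the naive binomial reduction of the previous step fails. Then one must run the secondary convex--hull analysis on the shifted points $\tau_p=\overline{\omega}_p+im_pe_k$: between consecutive secondary vertices $\tau_{kj},\tau_{kj+1}$ exactly $n_{kj}$ of the original terms contribute comparably, the balance equation $1+g(z)=0$ becomes a genuine polynomial of degree $n_{kj}-1$ in a single exponential $e^{ce_kz}$ rather than a binomial, and the argument--principle count over $R_{kj}(\alpha,s,H)$ therefore carries an intrinsic ambiguity of at most $n_{kj}-1$ zeros, which is the $n_{kj}-1$ term on the right of~\eqref{2.9}. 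Showing that this ambiguity stays bounded by $n_{kj}-1$ uniformly in $\alpha$ --- that is, that the perturbations $[1+\epsilon(z)]$ and the transition zones between adjacent windows do not inflate the effective degree --- is the main technical obstacle; for the complete bookkeeping I would follow Dickson \cite{Dickson,Dickson2}.
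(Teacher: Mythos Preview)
The paper does not actually prove this theorem: its entire proof is the single sentence ``We refer the proof to Dickson \cite{Dickson}.'' Your sketch is a correct outline of the classical argument that one finds in \cite{Dickson,Dickson2}---dominance analysis on the indicator diagram to localize zeros to the logarithmic strips $V_{kj}$, followed by an argument-principle count inside each window $R_{kj}(\alpha,s,H)$ after reducing to the dominant pair (or, when several $\overline{\omega}_p$ lie on $L_k$, to the secondary convex-hull reduction that produces the $n_{kj}-1$ ambiguity). So your proposal is not merely consistent with the paper's approach but in fact supplies the content that the paper delegates to the reference; there is nothing to compare beyond noting that you have written out what the paper chose to cite.
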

\begin{proof}
We refer the proof to Dickson \cite{Dickson}.
\end{proof}
\begin{proposition}\label{D}
Let $\{z_{n}^{j}\}_{n=1}^{\infty}$ be the point sequence of zeros of $\hat{V}^{j'}(2z)\hat{V}^{j'}(-2z)$ such that $|z_{1}^{j'}|<|z_{2}^{j'}|<|z_{3}^{j'}|<\cdots$, $j'=1,2.$ For $0\leq\delta\ll 1$ and $0\leq\sup_{n}|z^{1}_{n}-z^{2}_{n}|<\delta$, the difference of numbers of $\{z^{1}_{n}\}$ and $\{z^{2}_{n}\}$ inside  \begin{eqnarray*}
S_{R}:=\{z\in\mathbb{C}|\,0\leq\Re z\leq R,\,-K\leq\Im z\leq  K\},
\end{eqnarray*} is uniformly bounded.
\end{proposition}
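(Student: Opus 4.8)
The plan is threefold: recognize each $g_{j'}(z):=4z^{2}\widehat{V}^{j'}(2z)\widehat{V}^{j'}(-2z)$ as an entire function of the class~(\ref{sum}); invoke Dickson's Theorem~\ref{Dickson} to bound, uniformly in position, the number of zeros of $g_{j'}$ lying in a short vertical window; and then observe that, because the pairing $z_{n}^{1}\leftrightarrow z_{n}^{2}$ matches points within distance $\delta$, the counting functions of $\{z_{n}^{1}\}$ and $\{z_{n}^{2}\}$ over $S_{R}$ can disagree only through zeros lying within distance $\delta$ of $\partial S_{R}$, a set that carries only boundedly many zeros.

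\textbf{Step 1 (normal form).} Apply Lemma~\ref{2100} with $N=2$ to $\widehat{V}^{j'}(\pm2z)=\int_{0}^{1}V^{j'}(x)e^{\mp2izx}\,dx$. Since $V^{j'}(0)=1$ and $(V^{j'})'(0)=0$, the boundary terms involve only the exponentials $e^{0\cdot z}$ and $e^{\mp2iz}$; combined with the reality relation $\overline{\widehat{V}^{j'}(-2k)}=\widehat{V}^{j'}(2k)$, this exhibits $g_{j'}$ in the form~(\ref{sum}), its exponents lying on the imaginary axis and all its algebraic orders $m_{j}$ being zero. Hence $g_{1}$ and $g_{2}$ satisfy the hypotheses of Theorem~\ref{Dickson}, and the $\delta$-closeness of the two zero sequences forces them to share the same broken line $Q$, a segment on the imaginary axis.

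\textbf{Step 2 (Dickson's estimate).} Because all $m_{j}=0$, the numbers $\mu_{kj}$ of~(\ref{mu}) vanish, so the regions $V_{kj}(H)$ in~(\ref{V}) are genuine straight half-strips; Theorem~\ref{Dickson}(1) yields $H>0$ and a finite $N_{0}$ such that all but $N_{0}$ zeros of $g_{j'}$ lie in $\{|\Im z|\le H\}$, and we fix $K\ge H$ once and for all. Fixing also $s_{0}>0$ and applying~(\ref{2.9}), there is a constant $C_{0}$, independent of the window's position, such that $g_{j'}$ has at most $C_{0}$ zeros with real part in any interval of length $\le s_{0}$ lying sufficiently far out (the term $n_{kj}-1+\epsilon$ being absorbed into $C_{0}$); moreover $g_{j'}$, being entire, has at most finitely many zeros, say $C_{1}$, in the fixed bounded box $\{\,|\Re z|\le s_{0},\ |\Im z|\le K\,\}$.

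\textbf{Step 3 (comparison).} Take $\delta<s_{0}$ and fix $n_{0}$ beyond all exceptional indices. For $n\ge n_{0}$ both $z_{n}^{1}$ and $z_{n}^{2}$ have $|\Im|\le H\le K$, so the events $z_{n}^{1}\in S_{R}$ and $z_{n}^{2}\in S_{R}$ can differ only when $z_{n}^{1}$ lies in $\{0\le\Re z<\delta\}\cup\{R-\delta<\Re z\le R\}$ inside $|\Im z|\le K$; by Step 2 the number of zeros of $g_{1}$ there is at most $C_{0}+C_{1}$, uniformly in $R$. Hence $\#\{n\ge n_{0}:z_{n}^{1}\in S_{R}\}-\#\{n\ge n_{0}:z_{n}^{2}\in S_{R}\}\le C_{0}+C_{1}$, the reverse inequality follows on exchanging the roles of $g_{1}$ and $g_{2}$, and adding the at most $n_{0}$ small indices on each side yields $\big|\#\{n:z_{n}^{1}\in S_{R}\}-\#\{n:z_{n}^{2}\in S_{R}\}\big|\le C_{0}+C_{1}+n_{0}$, a bound independent of $R$ and of $\delta<s_{0}$, which is the assertion.

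\textbf{Main obstacle.} The delicate point is the uniformity in Step 2: what is needed is not the mere finite density of zeros of $g_{j'}$ near $\mathbb{R}$ (already implied by Theorem~\ref{Cartwright}), but a bound on the zero count over a fixed-length real window that does not deteriorate as the window moves out, together with confinement of the zeros to a fixed horizontal strip. If either failed, the $\delta$-collar along $\Re z=R$ (respectively along $\Im z=\pm K$) could absorb a number of zeros growing with $R$, and the statement would be false for fixed $\delta,K$. Theorem~\ref{Dickson} supplies both precisely because the algebraic orders coincide ($m_{j}=0$, whence $\mu_{kj}=0$ and the $V_{kj}$ are straight rather than logarithmically drifting). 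The one remaining routine check is that the remainder $R_{N}$ of Lemma~\ref{2100} really has the admissible shape $z^{m_{j}}[1+\epsilon(z)]$ with $\epsilon(z)\to0$; this is verified on the horizontal strip where the dominant exponential balances the decay, and is where the $\mathcal{C}^{2}$ regularity of $V^{j'}$ is used.
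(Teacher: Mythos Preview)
Your proof is correct and follows essentially the same route as the paper: both arguments apply Dickson's Theorem~\ref{Dickson} to the exponential-sum expansion of $\hat{V}^{j'}(2z)\hat{V}^{j'}(-2z)$, use that all algebraic orders vanish so $\mu_{kj}=0$ and the zero strips are straight, and conclude a uniform bound on the zero count in windows of fixed real width. The paper carries out the Dickson computation more explicitly (listing $\omega_{11}=-2,\ \omega_{12}=0,\ \omega_{13}=2$, $n_{1j}=2$, and choosing $s=2\pi/|\omega_{1j+1}-\omega_{1j}|$ so that each rectangle holds at most two zeros), whereas you leave those constants abstract; conversely, your Step~3 boundary-collar argument makes explicit the passage from ``paired zeros share a rectangle'' to ``$|N^{1}(R)-N^{2}(R)|$ is bounded uniformly in $R$,'' a step the paper states only in one sentence.
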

\begin{proof}
We apply Theorem \ref{Dickson}. For our exponential polynomial 
$16z^{4}\hat{V}^{j'}(2z/i)\hat{V}^{j'}(-2z/i)$ in~(\ref{VV}), we deduce
\begin{eqnarray*}
&&k=1,\\
&&e_{1}=0,\\
&&\omega_{11}=-2,\,\omega_{12}=0,\,\omega_{13}=2;\\
&&m_{12}=m_{23}=0;\\
&&\mu_{12}=\mu_{23}=0;\\
&&n_{11}=n_{12}=2.
\end{eqnarray*}
Therefore, we use~(\ref{2.9}) to obtain
\begin{equation}\nonumber
\big|N_{\hat{V}^{^{j'}}(2z)\hat{V}^{^{j'}}(-2z)}(R_{1j}(\alpha,s,K))-s|\omega_{1j+1}-\omega_{1j}|/(2\pi)\big|<1+\epsilon.
\end{equation} 
Let us choose $\alpha\gg1$ large and $s=2\pi/|\omega_{1j+1}-\omega_{1j}|$ so that
\begin{equation}\label{2.11}
\big|N_{\hat{V}^{^{j'}}(2z)\hat{V}^{^{j'}}(-2z)}(R_{1j}(\alpha,2\pi/|\omega_{1j+1}-\omega_{1j}|,K))-1\big|<1+\epsilon,\,j'=1,2,\,j=1,2.
\end{equation} 
Hence, for small $0\leq\delta\ll1$, we have $z^{1}_{n}$ and $z^{2}_{n}$ falling in the same rectangle for large $n$.
This proves the proposition.
\end{proof}
\begin{lemma}\label{L29}
For $z^{4}\hat{V}(z)\hat{V}(-z)=\frac{1}{2}(1+o(1))+(1+o(1))e^{-iz}+(1+o(1))e^{iz},$ we conclude that the function $z^{4}\hat{V}(z)\hat{V}(-z)$ has only real zeros.
\end{lemma}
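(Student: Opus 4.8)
Write $g(z):=z^{4}\hat V(z)\hat V(-z)$ and let $g_{0}(z):=\tfrac12+e^{-iz}+e^{iz}=2\cos z+\tfrac12$ be its principal part, so that by hypothesis $g=g_{0}+o(1)$ on every horizontal strip, with the error analytic as $|z|\to\infty$. The plan is to compare $g$ with $g_{0}$ term by term. I would first record two structural facts. (a) Every zero of $g_{0}$ is real and simple: writing $w=e^{iz}$, the equation $2\cos z+\tfrac12=0$ becomes $w^{2}+\tfrac12w+1=0$, whose roots $w=\tfrac{-1\pm i\sqrt{15}}{4}$ have modulus $1$, which forces $\Im z=0$; and $g_{0}'=-2\sin z$ does not vanish at these points since $\cos z=-\tfrac14$ gives $\sin z=\pm\tfrac{\sqrt{15}}{4}$. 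The zeros are $\pm\theta_{0}+2\pi n$ with $\theta_{0}=\arccos(-\tfrac14)$, hence uniformly separated. (b) Since $V$ is real, $\overline{\hat V(\bar z)}=\hat V(-z)$ and $\hat V(z)\hat V(-z)$ is even, so $g(\bar z)=\overline{g(z)}$ and the zero set of $g$ is symmetric under $z\mapsto\bar z$; in particular $g(x)=x^{4}|\hat V(x)|^{2}\ge 0$ on $\mathbb R$.

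For the localization, note that the exponents $\{-i,0,i\}$ of $g$ are collinear and all the $m_{j}$ vanish, so $\mu_{kj}=0$, and Dickson's Theorem \ref{Dickson} says that all but finitely many zeros of $g$ lie in the union $\cup_{k,j}V_{kj}$, which here consists of two unbent horizontal half-strips of bounded width about $\mathbb R^{+}$ and $\mathbb R^{-}$, with density controlled by~(\ref{2.9}). Inside these strips $|e^{\pm iz}|$ is bounded, so from $g(z)=0$ one gets $e^{iz}+e^{-iz}=-\tfrac12+o(1)$, i.e. $\cos z\to-\tfrac14$ along the zeros; hence every zero $z_{j}$ of large modulus satisfies $\operatorname{dist}(z_{j},\{\pm\theta_{0}+2\pi n\})=o(1)$. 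Now fix $\eta>0$ below half the separation of the zeros of $g_{0}$. On a circle $|z-r|=\eta$ with $r$ a zero of $g_{0}$ one has $\min|g_{0}|=m_{\eta}>0$ by periodicity, while $|g-g_{0}|=o(1)<m_{\eta}$ once $|r|$ is large; by Rouch\'e, $g$ has exactly one zero $z_{j}$ in $\{|z-r|<\eta\}$, and it is simple. As $r$ is real this disc is conjugation-invariant, so $\overline{z_{j}}$ is also a zero of $g$ in it; by uniqueness $\overline{z_{j}}=z_{j}$, i.e. $z_{j}\in\mathbb R$. Thus every zero of $g$ of sufficiently large modulus is real.

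It remains to treat the finitely many zeros in a bounded region: $z=0$ is a real zero of multiplicity $4$ (as $\hat V(0)=\int_{0}^{1}V(x)\,dx>0$), and to rule out small non-real zeros I would combine the positivity $g\ge0$ on $\mathbb R$ with a winding-number count of $g$ along a large rectangle whose horizontal sides lie outside both half-strips (so that $g\ne0$ there), matching the total zero count to what has already been located on the real axis. The main obstacle is precisely this coupling of a uniform-in-strip estimate for the error term -- needed to run Rouch\'e on full circles rather than merely on $\mathbb R$, which in turn rests on the analyticity asserted for the $o(1)$ and a Phragm\'en--Lindel\"of argument across the strip -- with the bounded-region bookkeeping; it is also essential that the constant $\tfrac12$ be smaller in modulus than the sum of the two oscillatory amplitudes, for otherwise $g_{0}$ itself would have non-real zeros and the conclusion would fail.
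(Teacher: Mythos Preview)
Your route is genuinely different from the paper's. The paper does not localize zeros or use Rouch\'e at all: it simply rewrites
\[
z^{4}\hat V(z)\hat V(-z)=2\cos z+\Bigl[o(1)e^{-iz}+o(1)e^{iz}+\tfrac12\bigl(1+o(1)\bigr)\Bigr],
\]
notes that the bracket is bounded by $\tfrac12+o(1)$ on the real axis and that the whole function is real there, and then invokes the Duffin--Schaeffer theorem (Bull.\ Amer.\ Math.\ Soc.\ \textbf{44} (1938), p.~236) as a black box. That classical result is global, so it dispatches the large and the small zeros simultaneously; the bounded-region bookkeeping you single out as the ``main obstacle'' never arises. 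Your argument, by contrast, uses only tools already assembled in the paper (Dickson's Theorem~\ref{Dickson}, conjugation symmetry, Rouch\'e), which makes the mechanism for the large zeros completely transparent and recovers, in effect, the Duffin--Schaeffer conclusion by hand---but at the price of an unavoidable finite residue that must be counted separately.

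Two cautions about your version. First, the Rouch\'e step needs $|g-g_{0}|=o(1)$ on the full circles $|z-r|=\eta$, not just on $\mathbb R$; the hypothesis of the lemma (and the earlier asymptotic $4z^{2}\hat V(2z)\hat V(-2z)=1+o(1/z)$) is stated only for real $z$, so the Phragm\'en--Lindel\"of extension you allude to is not a cosmetic detail and has to be carried out. Second, your observation $g(x)=x^{4}|\hat V(x)|^{2}\ge 0$ on $\mathbb R$ is in direct tension with the asserted form $g(x)\sim 2\cos x+\tfrac12$, which is negative near $x=\pi+2\pi n$; this is an inconsistency in the lemma's hypothesis rather than in your reasoning, but it means you cannot rely on both facts at once when you handle the bounded region. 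The paper's Duffin--Schaeffer argument uses only the \emph{reality} of $g$ on $\mathbb R$ together with the bound $\tfrac12+o(1)<2$, so it is insensitive to this issue---and your final remark that the constant $\tfrac12$ must be smaller than the sum of the oscillatory amplitudes is exactly the hypothesis that drives that theorem.
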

\begin{proof}
We verify the conditions provided in Duffin and Schaeffer \cite[p.\,236]{Duffin}.
Let us note that 
\begin{eqnarray*}
z^{4}\hat{V}(z)\hat{V}(-z)&=&\frac{1}{2}(1+o(1))+(1+o(1))e^{-iz}+(1+o(1))e^{iz}\\&=&2\cos z+o(1)e^{-iz}+o(1)e^{-iz}+\frac{1}{2}(1+o(1)),
\end{eqnarray*}
in which, for real $z$,
\begin{eqnarray*}
|o(1)e^{-iz}+o(1)e^{-iz}+\frac{1}{2}(1+o(1))|\leq \frac{1}{2}+o(1).
\end{eqnarray*}
We also note that $\frac{z^{4}\hat{V}(z)\hat{V}(-z)}{2}$ is real on the real axis. Therefore, we deduce from \cite{Duffin} that $z^{4}\hat{V}(z)\hat{V}(-z)$ has only real zeros.
\end{proof}
\section{Proof of Theorem \ref{11}}
\begin{proof}
We begin with the Lemma \ref{Froese} and assumption that the zero set of entire functions $\hat{V}^{1}(2z)\overline{\hat{V}^{1}}(2z)$ and $\hat{V}^{2}(2z)\overline{\hat{V}^{2}}(2z)$ are close to each other in $\mathbb{C}$, that is,
$$0\leq\sup_{j}|\sigma_{j}^{1}-\sigma_{j}^{2}|<\delta.$$ 
Let us set
\begin{eqnarray*}
&&S_{R}=\{z\in\mathbb{C}|\,0\leq\Re z\leq R,\,-K\leq\Im z\leq K\};\\
&&N^{j}(R):=\mbox{the number of zeros }\{z_{n}^{j}\}\mbox{ of }\hat{V}^{j}(2z)\overline{\hat{V}^{j}}(2z)\mbox{ inside }S_{R}.
\end{eqnarray*}
Using Theorem \ref{Cartwright}, we have
\begin{eqnarray}
&&\hat{V}^{1}(2z)\overline{\hat{V}^{1}}(2z)=c^{1}z^{m^{1}}e^{2iz}\lim_{R\rightarrow \infty}\prod_{z^{1}_{n}\in S_{R}}(1-\frac{z}{z_{n}^{1}}),\,z=x+iy;\\
&&\hat{V}^{2}(2z)\overline{\hat{V}^{2}}(2z)=c^{2}z^{m^{2}}e^{2iz}\lim_{R\rightarrow \infty}\prod_{z_{n}\in S_{R}}(1-\frac{z}{z_{n}^{2}}),\,z=x+iy,
\end{eqnarray}
where the exponential factor $2iz$ is related to the width of indicator diagram of $\hat{V}^{j}(2z)\overline{\hat{V}^{j}}(2z)$ by Cartwright theory, and we refer the details to \cite[p.\,251]{Levin}.
We have assumed that zero is not a bound state, so we deduce
\begin{eqnarray}
&&\hat{V}^{1}(2z)\overline{\hat{V}^{1}}(2z)=\hat{V}^{1}(0)\overline{\hat{V}^{1}}(0)e^{2iz}\lim_{R\rightarrow \infty}\prod_{z^{1}_{n}\in S_{R}}(1-\frac{z}{z_{n}^{1}});\\
&&\hat{V}^{2}(2z)\overline{\hat{V}^{2}}(2z)=\hat{V}^{2}(0)\overline{\hat{V}^{2}}(0)e^{2iz}\lim_{R\rightarrow \infty}\prod_{z^{1}_{n}\in S_{R}}(1-\frac{z}{z_{n}^{2}}).
\end{eqnarray}
Using assumption~(\ref{1.4}), we obtain
\begin{eqnarray}
&&\hat{V}^{1}(2z)\overline{\hat{V}^{1}}(2z)=e^{2iz}\lim_{R\rightarrow  \infty}\prod_{z^{1}_{n}\in S_{R}}(1-\frac{z}{z_{n}^{1}});\\
&&\hat{V}^{2}(2z)\overline{\hat{V}^{2}}(2z)=e^{2iz}\lim_{R\rightarrow\infty}\prod_{z^{1}_{n}\in S_{R}}(1-\frac{z}{z_{n}^{2}}).
\end{eqnarray}
Let us set
\begin{eqnarray}
&&\Pi^{1}_{R}(z):=\prod_{z^{1}_{n}\in S_{R}}(1-\frac{z}{z_{n}^{1}});\\
&&\Pi^{2}_{R}(z):=\prod_{z^{1}_{n}\in S_{R}}(1-\frac{z}{z_{n}^{2}}),\,R\gg0,
\end{eqnarray}
and consider the contour integral of $\Pi^{j}_{R}(z)$ counter-clockwise around strip $S_{R}$.
Using Proposition \ref{D} and the assumption of Theorem \ref{11}, there is no zero on the contour $\partial S_{R}$ for suitable $R\gg1$ and $K>0$.
Now we apply the argumentation principle and Cauchy-Riemann conditions to deduce \begin{eqnarray*}\nonumber
2\pi N^{j}(R)&=&\int_{0}^{R}\frac{d\arg\Pi_{R}^{j}(x-iK)}{dx}dx-\int_{0}^{R}\frac{\arg \Pi_{R}^{j}(x+iK)}{dx}dx
\\&&+\Im\big\{\int_{-K}^{K} \frac{  [\Pi_{R}^{j}]'(R+iy)}{\Pi_{R}^{j}(R+iy)}-\frac{[\Pi_{R}^{j}]'(iy)}{\Pi_{R}^{j}(iy)} dy\big\}\\\nonumber
&=&\int_{0}^{R}\frac{d\ln | \Pi_{R}^{j}(x-iK)|}{dK}dx+\int_{0}^{R}\frac{d\ln |\Pi_{R}^{j}(x+iK)|}{dK}dx+C_{R}^{j},\,R\gg1,\,K\gg0,
\end{eqnarray*}
in which we set 
\begin{equation}
C_{R}^{j}=\Im\big\{\int_{-K}^{K}  \frac{ [\Pi_{R}^{j}]'(R+iy)}{\Pi_{R}^{j}(R+iy)}-\frac{[\Pi_{R}^{j}]'(iy)}{\Pi_{R}^{j}(iy)}  dy\big\},
\end{equation}
where $C_{R}^{j}$ is bounded by the construction of $\partial 
S_{R}$ with finite width $2K$. Here we note that $C_{R}^{j}\rightarrow0$ as $K\downarrow0$ by Lebesgue Theorem.
\par
Using change of variable,
\begin{eqnarray*}
2\pi N^{j}(R)&=&-i\int_{x=0}^{x=R}\frac{d\ln | \Pi_{R}^{j}(x-iK)|}{d(x-iK)}d(x-iK)+i\int_{x=0}^{x=R}\frac{d\ln |\Pi_{R}^{j}(x+iK)|}{d(x+iK)}d(x+iK)+C_{R}^{j}\\
&=&-i[\ln | \Pi_{R}^{j}(R-iK)|- \ln| \Pi_{R}^{j}(-iK)|]+i[ \ln|\Pi_{R}^{j}(R+iK)|-\ln|\Pi_{R}^{j}(iK)|]+C_{R}^{j}.
\end{eqnarray*}
Hence,
\begin{eqnarray*}
2\pi iN^{j}(R)=\ln\frac{|\Pi_{R}^{j}(R-iK)|}{|\Pi_{R}^{j}(-iK)|}-\ln\frac{|\Pi_{R}^{j}(R+iK)|}{|\Pi_{R}^{j}(iK)|}+iC_{R}^{j}.
\end{eqnarray*}
In this case, we consider the subtraction
\begin{eqnarray*}
2\pi i\{ N^{1}(R)-N^{2}(R)\}&=&\ln|\frac{\Pi_{R}^{1}(R-iK)}{\Pi_{R}^{2}(R-iK)}|-\ln|\frac{\Pi_{R}^{1}(R+iK)}{\Pi_{R}^{2}(R+iK)}|\\
&&-\ln|\frac{\Pi_{R}^{1}(-iK)}{\Pi_{R}^{2}(-iK)}|+\ln|\frac{\Pi_{R}^{2}(iK)}{\Pi_{R}^{1}(iK)}|\\
&&+iC_{R}^{1}-iC_{R}^{2}.
\end{eqnarray*}
Let us set $$C_{R}:=-\ln|\frac{\Pi_{R}^{1}(-iK)}{\Pi_{R}^{2}(-iK)}|+\ln|\frac{\Pi_{R}^{2}(iK)}{\Pi_{R}^{1}(iK)}|+iC_{R}^{1}-iC_{R}^{2},$$
in which the $C_{R}$ is a constant depending on $R$ and $K$.
Hence,
\begin{eqnarray}\label{3.10}
2\pi i\{ N^{1}(R)-N^{2}(R)\}=\ln\{\frac{|\Pi_{R}^{1}(R-iK)|}{|\Pi_{R}^{2}(R-iK)|}\frac{|\Pi_{R}^{2}(R+iK)|}{|\Pi_{R}^{1}(R+iK)|}\}+C_{R}.
\end{eqnarray}
Now we use Lemma \ref{L29} to set $K\downarrow0$ in~(\ref{3.10}) to deduce $C_{R}\rightarrow0$, and
\begin{eqnarray}
2\pi i\{ N^{1}(R)-N^{2}(R)\}&\rightarrow&\ln\{\frac{|\Pi_{R}^{1}(R+i0^{-})|}{|\Pi_{R}^{1}(R+i0^{+})|}\frac{|\Pi_{R}^{2}(R+i0^{+})|}{|\Pi_{R}^{2}(R+i0^{-})|}\}\\
&=&\ln\{\frac{|\Pi_{R}^{1}(R+i0^{-})|}{|\Pi_{R}^{1}(R+i0^{+})|}\}-\ln\{\frac{|\Pi_{R}^{2}(R+i0^{-})|}{|\Pi_{R}^{2}(R+i0^{+})|}\}.\label{3.12}
\end{eqnarray}
On the other hand, we deduce
\begin{eqnarray}\nonumber
&&\hspace{6pt}2\pi i \{N^{1}(R)-N^{2}(R)\}\\\nonumber
&=&\int_{\partial S_{R}}[\ln\Pi^{1}_{R}(z)]'-[\ln\Pi^{2}_{R}(z)]'dz\\\nonumber
&=&\sum_{z^{1}_{n}\in S_{R},\,z^{2}_{n}\in S_{R}}\int_{\partial S_{R}}\frac{z_{n}^{1}-z_{n}^{2}}{(z-z_{n}^{1})(z-z_{n}^{2}))}dz\\\nonumber
&=&\sum_{\{{z^{1}_{n}\in S_{R}},{z^{1}_{n}\in S_{R}}\}}(z_{n}^{1}-z_{n}^{2}) \int_{\partial S_{R}}\frac{1}{(z-z_{n}^{1})(z-z_{n}^{2})}dz,
\end{eqnarray}
in which $\{z_{n}^{1}\}$ and $\{z_{n}^{2}\}$ are sufficiently close by assumption, that is , $0<|z_{n}^{1}-z_{n}^{2}|\ll1$. 
Using Proposition \ref{D},  $|N^{1}(R)-N^{2}(R)|$ is is uniformly bounded with respect to $R$.
\par
Furthermore, we let $S_{R_{n}}$ be the suitable rectangle in~(\ref{2.11}) that merely contains zero $z_{n}^{1}$ and $z_{n}^{2}$ for $n\gg1$, and deduce 
\begin{eqnarray*}
&&|\sum_{\{{z^{1}_{n}\in S_{R}},{z^{1}_{n}\in S_{R}}\}}(z_{n}^{1}-z_{n}^{2})\int_{\partial S_{R}}\frac{1}{(z-z_{n}^{1})(z-z_{n}^{2})}dz|\\
&=&|\sum_{\{{z^{1}_{n}\in S_{R}},{z^{1}_{n}\in S_{R}}\}}(z_{n}^{1}-z_{n}^{2})\int_{\partial S_{R_{n}}}\frac{1}{(z-z_{n}^{1})(z-z_{n}^{2})}dz|\\
&\leq&\sum_{\{{z^{1}_{n}\in S_{R}},{z^{1}_{n}\in S_{R}}\}}|z_{n}^{1}-z_{n}^{2}||\int_{\partial S_{R_{n}}}\frac{1}{(z-z_{n}^{1})(z-z_{n}^{2})}dz|\\
&\leq&\delta\sum_{\{{z^{1}_{n}\in S_{R}},{z^{1}_{n}\in S_{R}}\}}|\int_{\partial S_{R_{n}}}\frac{1}{(z-z_{n}^{1})(z-z_{n}^{2})}dz|,
\end{eqnarray*}
Without loss of generality, we assume $S_{R_{n}}$ is a square of width $2$, and $|\partial S_{R_{n}}|=8$. From Lemma \ref{L29}, we have 
$$\int_{\partial S_{R_{n}}}\frac{z_{n}^{1}-z_{n}^{2}}{(z-z_{n}^{1})(z-z_{n}^{2})}dz=0,\,\delta\ll1,\,n\gg1.$$
For the finite summation, we obtain
$$\sum_{\{{z^{1}_{n}\in S_{R}},{z^{1}_{n}\in S_{R}}\}}|\int_{\partial S_{R_{n}}}\frac{z_{n}^{1}-z_{n}^{2}}{(z-z_{n}^{1})(z-z_{n}^{2})}dz|<\delta M,$$
for some $M>0$.
In this case,
\begin{eqnarray}\label{3.13}
2\pi \{N^{1}(R)-N^{2}(R)\}<\delta M.
\end{eqnarray}
Since $\{N^{1}(R)-N^{2}(R)\}$ is an integer for any $R\gg1$, we conclude from~(\ref{3.13}) that $N^{1}(R)=N^{2}(R)$ for fixed $R\gg1$ as $\delta\rightarrow0$.
Using~(\ref{3.12}), we deduce that 
\begin{equation} 
\{z_{n}^{1}\}\equiv\{z_{n}^{2}\} \mbox{ for } z_{n}^{1},z_{n}^{2}\in S_{R}.
\end{equation}

\par
Now we use the Cartwright's Theorem \ref{Cartwright}. To deduce that: for any $\epsilon>0$ and $z$ on real axis, there exists an $R_{0}\gg1$ such that
\begin{equation}
\big|\hat{V}^{1}_{R}(z)\overline{\hat{V}^{1}}_{R}(z)-\hat{V}^{2}_{R}(z)\overline{\hat{V}^{2}}_{R}(z)\big|<\epsilon,\mbox{ whenever }R\geq R_{0}.
\end{equation}
That is 
\begin{equation}
\big||\hat{V}^{1}_{R}(z)|^{2}-|\hat{V}^{2}_{R}(z)|^{2}\big|<\epsilon,\mbox{ whenever }R\geq R_{0}.
\end{equation}

\end{proof}

\end{document}